\theoremstyle{plain}
\date{\today}
\title[Frequent hypercyclicity and unconditional Schauder decompositions]{Frequent
hypercyclicity, chaos, and unconditional Schauder decompositions}
\author{Manuel De la Rosa}
\address{Institut Universitari de Matem\`{a}tica Pura i Aplicada (IUMPA),
Universitat Polit\`{e}cnica de Val\`{e}ncia, 46022 Val\`{e}ncia, Spain}
\email{madela19@upvnet.upv.es}
\author{Leonhard Frerick}
\address{FB IV Mathematik,
Universit\"{a}t Trier,
D–54286 Trier,
Germany}
\email{frerick@uni-trier.de}
\author{Sophie Grivaux}
\address{Laboratoire Paul Painlev\' e, UMR 8524, Universit\'e
Lille 1, Cit\' e Scientifique, 59655 Villeneuve d'Ascq
Cedex, France}
\email{grivaux@math.univ-lille1.fr}
\author{Alfredo Peris}
\address{IUMPA, Universitat Polit\`{e}cnica de Val\`{e}ncia, Departament
de Matem\`{a}tica Aplicada, Edifici 7A, 46022 Val\`{e}ncia, Spain}
\email{aperis@mat.upv.es}
\subjclass{47A16, 37A05, 47A35, 46B09, 46B15}
\keywords{Linear dynamical systems; hypercyclic, chaotic, and frequently
hypercyclic operators;  spaces with an unconditional Schauder decomposition}
\thanks{This work  was partially supported by ANR-Projet Blanc DYNOP, by the MEC and FEDER Projects
MTM2007-64222 and MTM2010-14909,  and by Generalitat Valenciana Project
PROMETEO/2008/101.}
\def\T{\ensuremath{\mathbb T}}
\def\R{\ensuremath{\mathbb R}}
\def\N{\ensuremath{\mathbb N}}
\newcommand{\wt}{\widetilde}
\newcommand{\spa}{\operatorname{span}}
\newcommand{\sep}{separable}
\newcommand{\hy}{hypercyclic}
\newcommand{\fhy}{frequently hypercyclic}
\newcommand{\ops}{operators}
\newcommand{\op}{operator}
\newcommand{\erg}{ergodic}
\newcommand{\eve}{eigenvector}
\newcommand{\eva}{eigenvalue}
\newcommand{\ps}{perfectly spanning set of eigenvectors associated to unimodular
eigenvalues}
\newcommand{\wrt}{with respect to}
\newcommand{\ga}{Gaussian}
\newcommand{\inv}{invariant}
\newcommand{\mea}{measure}
\newcommand{\nd}{non-degenerate}
\newcommand{\ifff}{if and only if}
\newcommand{\pss}[2]{\ensuremath{{\langle #1,#2\rangle}}}
\newcommand{\norm}[1]{\left\Vert#1\right\Vert}
\newcommand{\abs}[1]{\left\vert#1\right\vert}
\newtheorem{theorem}{Theorem}[section]
\theoremstyle{definition}}
\theoremstyle{definition}}
\theoremstyle{definition}}
\theoremstyle{definition}\newtheorem{definition}[theorem]{Definition}}
\theoremstyle{definition}}
\theoremstyle{definition}}
\newtheorem{question}[theorem]{Question}
\theoremstyle{definition}\newtheorem*{FFC Criterion}{Frequent
Faber-hypercyclicity Criterion}}
\newtheorem*{Hypercyclicity Criterion}{Hypercyclicity Criterion}
{\theoremstyle{definition}\newtheorem*{GS Criterion}{Godefroy-Shapiro
Criterion}}
\def\piednote#1{\let\oldfn=\thefootnote\def\thefootnote{}\footnote{\noindent#1}%
\addtocounter{footnote}{-1}\def\thefootnote{\oldfn}}
\begin{document}

\begin{abstract}
We prove that if $X$ is any complex separable infinite-dimensional Banach space with an unconditional Schauder decomposition, $X$ supports an operator
$T$ which is chaotic and frequently
 hypercyclic.
In contrast with the complex case, we observe that there are real Banach spaces with an unconditional basis which support no chaotic operator.
\end{abstract}
\maketitle

\section{Introduction}
We are interested in this paper in the dynamics of continuous linear \ops\ acting on a complex infinite-dimensional \sep\ Banach space $X$. If $T$ is
such an \op\ on $X$, $T$ is said to be \hy\ if there exists a vector $x\in X$ (a \hy\ vector for $T$) such that $\mathcal{O}\textrm{rb}(x,T)=\{T^{n}x
\textrm{ ; } n\geq 0\}$ is dense in $X$. Hypercyclicity has had many developments in the past years, and we refer the reader
 to the recent books \cite{BM} and \cite{Grosse-ErdmannPeris10book} for a
 thorough account of the subject.\par\smallskip
We study here reinforcements of hypercyclicity: chaotic \ops\ are topologically transitive \ops\ (or, in other words, hypercyclic operators) which have
a dense set of periodic points (a vector $x\in X$ is said to be periodic if there exists an integer $N\geq 1$ such that $T^{N}x=x$). This notion of
chaos coincides in our setting with the classical one introduced by Devaney. Another strengthening of hypercyclicity is the notion of frequent
hypercyclicity, which was introduced in \cite{BG1}: $T$ is said to be \fhy\ if there exists a vector $x\in X$ such that for every non-empty open subset
$U$ of $X$, the set $\{n\geq 0 \textrm{ ; } T^{n}x\in U\}$ of instants when the iterates of $x$ under $T$ visit $U$ has positive lower density:
$$\liminf_{N\rightarrow +\infty }\frac{1}{N}\abs{\{n\leq N \textrm{ ; } T^{n}x\in U\}}>0, $$
\par\smallskip
where $\abs{A}$ denotes the cardinality of a finite subset $A\subset \N$. It was proved independently by Ansari \cite{A} and Bernal-Gonzalez \cite{B}
that any \sep\ infinite-dimensional Banach space $X$ supports a \hy\ \op. These \ops\ are of the form $T=I+K$, where $K$ is a nuclear backward weighted
shift \wrt\ a biorthogonal system of $X$. In particular the spectrum of $T$ is reduced to the point $\{1\}$. The fact that chaos and frequent
hypercyclicity are really stronger notions than \hy ity is attested by the fact that the corresponding existence result does not hold true anymore.
That is, it was shown by Bonet, Mart\'{\i}nez-Gim\'{e}nez and Peris \cite{BFP} that some Banach spaces do not support any chaotic \op, and by Shkarin
\cite{S} that the same spaces do not support any \fhy\ \op. The class of spaces considered in \cite{BFP} and \cite{S} is the class of complex
hereditarily indecomposable Banach spaces (like the space of Gowers and Maurey \cite{GM}). We recall that a Banach space $X$ is said to be hereditarily
indecomposable if no closed subspace of $X$ is decomposable  as a direct sum of infinite-dimensional subspaces. On such
 spaces every \op\ has the form $T=\lambda I+S$, where $\lambda $ is a
 scalar and $S$ is a strictly singular \op\ on $X$. Hence if $T$ is \hy,
 the spectrum of $T$ is reduced to the point $\{\lambda \}$ with $|\lambda |=1$.
 Now it is proved in \cite{S} that the spectrum of a \fhy\ \op\ cannot have an
 isolated point, and the same holds true for a chaotic \op.

The main purpose of this work is to investigate the following question:

\begin{question}\label{q4}
 Is it possible to characterize the complex \sep\ Banach spaces which
 support a \fhy\ (respectively a chaotic) \op?
\end{question}

We are not able to answer completely Question \ref{q4}, but we prove the following theorem, which gives a fairly large class of spaces on which such
\ops\ can indeed be constructed:

\begin{theorem}\label{th1}
 Let $X$ be a complex \sep\ Banach space having an unconditional Schauder
 decomposition. Then $X$ supports an \op\ which is \fhy\ and chaotic.
 \end{theorem}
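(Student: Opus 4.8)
The plan is to produce an operator $T$ whose unimodular point spectrum is rich enough to drive both properties at once, by exhibiting a continuous eigenvector field $\lambda\mapsto E(\lambda)$ ($\lambda\in\T$) with $TE(\lambda)=\lambda E(\lambda)$ and $\overline{\spa}\{E(\lambda):\lambda\in\T\}=X$. Two standard mechanisms then apply in parallel. First, the eigenvectors attached to roots of unity are periodic points, so their density (which follows from spanning plus continuity of the field, since each $E(\lambda)$ is approximable by $E(\mu)$ with $\mu$ a root of unity) yields a dense set of periodic points; together with the hypercyclicity that also follows, this gives that $T$ is chaotic. Second, a continuous spanning field is automatically a \ps\ (removing a countable set of $\lambda$'s does not affect the Fourier-coefficient recovery of the generating vectors, hence does not destroy density), so by the eigenvalue form of the \FHC\ the operator $T$ is \erg\ for a \gam\ of full support, and in particular \fhy. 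Thus everything reduces to building, on an arbitrary $X$ with an unconditional Schauder decomposition, a bounded \op\ carrying such a field.

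To construct $T$ I would use the decomposition $X=\bigoplus_n X_n$, with projections $P_n$ and unconditionality constant $C$, as the skeleton of a generalized weighted backward shift. In each separable block $X_n$ fix a bounded Markushevich basis $(f^n_k,(f^n_k)^*)_{k\ge1}$, normalized so that $\|f^n_k\|=1$ and $\sup_{n,k}\|(f^n_k)^*\|\le M$, and define $T$ blockwise by
\[
(Tx)_n=\sum_{k\ge1}w_{n+1,k}\,(f^{n+1}_k)^*(P_{n+1}x)\,f^n_k\qquad(n\ge1),
\]
for positive weights $w_{n,k}$ to be chosen. By biorthogonality $T$ sends $f^{n+1}_k\mapsto w_{n+1,k}f^n_k$, so each ``thread'' $(f^n_k)_{n\ge1}$ (fixed $k$) is invariant and $T$ restricts to it as an ordinary scalar weighted backward shift. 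Along the $k$-th thread the equation $Tx=\lambda x$ is solved explicitly by $E_k(\lambda)=\sum_{n\ge1}\lambda^{\,n-1}\big(\prod_{j=2}^{n}w_{j,k}\big)^{-1}f^n_k$, and the closed span of $\{E_k(\lambda):k\ge1,\ \lambda\in\T\}$ contains every $f^n_k$ (recover it by integrating $\lambda^{-m}E_k(\lambda)$ over $\T$), hence is dense because the Markushevich bases are fundamental in the blocks. This family is the candidate eigenvector field.

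The hard part, and where I expect the whole difficulty to concentrate, is choosing the weights $w_{n,k}$ so that $T$ is bounded while simultaneously each series $E_k(\lambda)$ converges and depends continuously on $\lambda\in\T$. These demands genuinely conflict over arbitrary blocks: convergence of $E_k(\lambda)$ on the circle forces the products $\prod_{j\le n}w_{j,k}$ to grow in $n$, i.e.\ the weights must eventually exceed $1$ along each thread; but since an arbitrary $X_n$ need \emph{not} carry a Schauder basis, the only control available on the block map is the $\ell^1$-type bound $\|D_n\|\le M\sum_k w_{n+1,k}$, so boundedness of $T$ forces the weights to be \emph{summable in $k$} at each block, hence small for most $k$. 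A one-step shift cannot reconcile these, which reflects the fact that rich spanning eigenvectors for a pure block shift would require the $X_n$ to be isomorphic (or at least mutually complemented), something not available in general. The way I would try to break the deadlock is to give each thread room by spreading it across infinitely many blocks rather than shifting one step at a time, and to activate the threads along a lacunary staircase of block-intervals that are pairwise disjoint, so that at each block only a controlled total weight is in play; the unconditionality constant $C$ then upgrades the blockwise estimates to a bound on $T$, while along each thread the growth accumulated on its own intervals is arranged to dominate, keeping $\sum_n(\prod_{j\le n}w_{j,k})^{-1}$ finite and the field continuous on $\T$. Tuning this multi-parameter weight scheme so that both constraints hold at once is the technical heart of the argument.

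Finally I would assemble the pieces: the resulting $T$ is a bounded \op\ on $X$ whose unimodular eigenvectors $E_k(\lambda)$ depend continuously on $\lambda$ and span a dense subspace. Density of the eigenvectors attached to roots of unity then gives dense periodic points, so $T$ is chaotic, and the \ps\ delivers frequent hypercyclicity through the \FHC, with the \HC\ guaranteeing the underlying hypercyclicity. The only nonroutine input beyond these criteria is the weight construction of the preceding paragraph; once it is carried out, the verification that the hypotheses of the criteria hold is a direct computation using the explicit form of $E_k(\lambda)$ and the unconditionality of the decomposition.
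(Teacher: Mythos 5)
Your reduction to ``build a bounded operator with a continuous, spanning field of unimodular eigenvectors'' is the right framework and matches the paper's strategy (via the perfectly spanning criterion of Bayart--Grivaux). But the construction itself --- which you correctly identify as the technical heart --- is missing, and the route you sketch runs into exactly the obstruction you describe without resolving it. For a pure weighted backward shift, convergence of $E_k(\lambda)=\sum_n \lambda^{n-1}\bigl(\prod_{j\le n} w_{j,k}\bigr)^{-1}f^n_k$ for $\lambda\in\T$ forces the weight products to grow along each thread, while boundedness of $T$ on a general block $X_n$ (which need not have a Schauder basis, only a biorthogonal system) forces the weights to be summable in $k$, i.e.\ to make the shift nuclear and hence small. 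Your proposed fix (spreading threads over a lacunary staircase of blocks) is not carried out, and it is far from clear it can be: the same tension reappears on each thread's own intervals, and no estimate is given showing both constraints can be met simultaneously.

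The paper resolves this with a different idea that is entirely absent from your proposal: the operator is not a shift but a perturbation $T=D_\mu+B_w$ of a \emph{diagonal} operator with unimodular entries $\mu_p$ by a \emph{nuclear} weighted backward shift. The eigenvector series then reads $E_i(\lambda)=e_{i,0}+\sum_{n\ge 1}\bigl(\prod_{p=0}^{n-1}(\lambda-\mu_p)/w_{i,p}\bigr)e_{i,n}$, so convergence is governed by the numerator $\prod_p|\lambda-\mu_p|$ rather than by growth of the weights. Choosing the $\mu_p$ recursively so that they accumulate on a Cantor set $K\subset\T$, with each new generation exponentially close (relative to the already-fixed weights) to the previous one, makes these products decay super-fast for every $\lambda\in K$; the weights can then be taken as small as one likes, which is exactly what makes $B_w$ nuclear and hence bounded on an arbitrary space with an unconditional Schauder decomposition, and the whole operator is transferred from $\ell^2$-blocks to $X$ by an intertwining map $J$ with dense range. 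Two further points you would need to repair: the eigenvector field need only be defined and continuous on a perfect compact set $K$ carrying a continuous measure, not on all of $\T$ (density of the span follows because each $E_i(\mu_N)$ is a \emph{finite} sum, giving all basis vectors by triangularity, not by Fourier inversion, which is unavailable on $K$); and your claim that a continuous spanning field automatically yields ergodicity with respect to an invariant Gaussian measure on an arbitrary Banach space is not justified --- the paper obtains both frequent hypercyclicity and the Gaussian statement by first doing the entire construction on a Hilbert space, where the relevant criteria are known to hold, and only then transferring to $X$.
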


In particular any complex Banach space with an unconditional basis admits a \fhy\ and chaotic \op. Actually, it is still an open question to know
whether every chaotic \op\ on a Banach space is automatically \fhy; see \cite{Grivaux10} for more details on this question.
\par\smallskip
The \fhy\ \ops\ constructed in the proof of Theorem \ref{th1} have an interesting property: they are compact (even nuclear) perturbations of diagonal
\ops\ whose diagonal coefficients are complex numbers of modulus $1$. The proof of Theorem \ref{th1} is done via a transference argument. In other
words, we first construct a class of nuclear perturbations of diagonal \ops\ on a Hilbert space, then transfer these operators to our Banach spaces. We
show in fact that our \ops\ enjoy a stronger property:

\begin{theorem}\label{th2}
If $X$ is a \sep\ complex Banach space which has an unconditional Schauder decomposition, then $X$ supports a bounded \op\ $T$ which is ergodic with
respect to a \nd\ \inv\ \ga\ \mea.
\end{theorem}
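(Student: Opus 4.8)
The plan is to reduce Theorem \ref{th2} to a single eigenvector property and then to a concrete construction. The key tool is the \ga\ ergodicity criterion (see \cite{BM}): if a bounded \op\ $T$ on a complex \sep\ Banach space admits a \ps, then $T$ is \erg\ --- indeed \wmx\ --- with respect to some \nd\ \inv\ \gam. Granting this, it suffices to build on every complex \sep\ $X$ with an unconditional Schauder decomposition a bounded \op\ $T$ having such a perfectly spanning family; the \gam\ and the ergodicity then come for free.

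To produce $T$ I would follow the transference scheme announced in the introduction. First I would work on a Hilbert space model $H=\ell^{2}$ with orthonormal basis $(e_{n})$, and set $T_{0}=D_{0}+S_{0}$, where $D_{0}e_{n}=\lambda_{n}e_{n}$ for a sequence $(\lambda_{n})\subset\T$ chosen dense in $\T$ and suitably distributed (so that, in addition, roots of unity of every order occur --- this will give the dense set of periodic points, and hence the chaos, in the companion Theorem \ref{th1}), and $S_{0}$ is a backward weighted shift with weights $(w_{n})$ decreasing to $0$ fast enough that $S_{0}$ is nuclear and $T_{0}$ bounded. I would then transfer this operator to $X$: writing $X=\overline{\bigoplus_{n}X_{n}}$ and choosing unit vectors $f_{n}$ in the blocks, the same scalars and weights define a bounded \op\ $T=D+S$ on $X$, the boundedness of $D$, the nuclearity of $S$, and all the norm estimates below being controlled uniformly by the unconditionality of the decomposition.

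Next I would construct the eigenvectors. For $\lambda$ in a large subset of $\T$ the equation $(T-\lambda)u(\lambda)=0$ can be solved by the triangular recursion forced by the shift structure; the rapid decay of the weights makes the series $u(\lambda)=\sum_{n}u_{n}(\lambda)\,f_{n}$ converge in $X$ and depend analytically, hence measurably, on $\lambda$, with associated (unimodular) eigenvalue $\lambda$. Perfect spanning is then checked in the usual way: if $x^{*}\in X^{*}$ annihilates $u(\lambda)$ for all $\lambda$ in a set of positive measure, then the analytic function $\lambda\mapsto\pe{u(\lambda)}{x^{*}}$ vanishes identically, and the density of $(\lambda_{n})$ forces $x^{*}=0$. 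Finally the \nd\ \inv\ \gam\ is realised as the law of the \ga\ vector $\sum_{n}g_{n}\,u(\lambda_{n})$ with independent standard complex Gaussians $g_{n}$ (the decay of $\norm{u(\lambda_{n})}$ guaranteeing almost sure convergence in $X$): unimodularity of the eigenvalues yields $T$-invariance of the law, and the dense span yields non-degeneracy, i.e. full support.

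The hard part will be to reconcile three competing demands simultaneously: the perturbation $S$ must stay nuclear (and $T$ bounded), the eigenvector field $\lambda\mapsto u(\lambda)$ must be defined and perfectly spanning on a full-measure subset of $\T$, and the \ga\ series $\sum_{n}g_{n}u(\lambda_{n})$ must converge in $X$ with a covariance of dense range. Calibrating the decay of the weights $(w_{n})$ against the distribution of the eigenvalues $(\lambda_{n})$ so that all three hold at once is the delicate point, and it is exactly here that the unconditionality of the Schauder decomposition is indispensable, since it is what allows the Hilbert-space estimates to be transferred to the arbitrary Banach space $X$.
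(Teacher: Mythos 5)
Your overall architecture (Hilbert-space model, transference to $X$, unimodular eigenvector fields) matches the paper's, but the argument as written has a central gap plus two subsidiary ones, and the central gap is exactly the point the paper's proof is designed to avoid. You invoke a ``Gaussian ergodicity criterion'' on an arbitrary complex separable Banach space: a \ps\ implies ergodicity with respect to a \nd\ \inv\ \gam. No such criterion is available in that generality: the Banach-space results of \cite{BG3} require geometric assumptions on $X$ (type or cotype $2$) or strong regularity of the eigenvector fields, and what \cite{BG2} provides is the Hilbert-space case only. Accordingly, the paper does \emph{not} run the criterion on $X$. It applies \cite{BG2} to $\bar{T}=\bar{D}_{\mu}+\bar{B}_{w}$ on the Hilbert space $H$ to obtain an ergodic \nd\ \inv\ \gam\ $\bar{m}$ there, and then transports the \emph{measure} (not merely the operator) to $X$ by the push-forward $m(A)=\bar{m}(J^{-1}(A))$ along the bounded, injective, dense-range map $J$ satisfying $TJ=J\bar{T}$; invariance, non-degeneracy and ergodicity of $m$ then follow from this intertwining relation. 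That detour through the push-forward measure is the essential idea of the proof of Theorem \ref{th2}, and your proposal is missing it.

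The two other gaps: (i) you assert that the eigenvector field $u(\lambda)$ is defined and analytic on a large subset of $\T$, and deduce perfect spanning by analytic continuation. But the weights must decay fast enough for $S$ to be nuclear on a general Banach space, so $1/w_{n}$ grows rapidly and the series defining $u(\lambda)$ converges only when $\lambda$ is extremely well approximated by the diagonal coefficients. In the paper the field is defined and merely \emph{continuous} on a Cantor set $K\subset\T$; spanning comes from the triangular form of the eigenvectors at the points $\mu_{N}\in K$ together with continuity, and the perfect-spanning measure $\sigma$ is a continuous measure supported on $K$, not Lebesgue measure. (ii) Your candidate measure, the law of $\sum_{n}g_{n}u(\lambda_{n})$ over a \emph{countable} family of eigenvalues, is invariant but never ergodic: for a Gaussian dynamical system, ergodicity is equivalent to weak mixing and to the continuity of the spectral measure on $\T$ determined by the covariance, and a countable sum produces a purely atomic one. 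One needs a Gaussian integral of $u(\lambda)$ against a continuous measure carried by a perfect set of eigenvalues, which is precisely why the paper builds $K$ as a perfect compact set and why Definition \ref{def1} requires $\sigma$ to be a continuous measure.
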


The Hilbert space construction is carried out in Section 2, and Theorems \ref{th1} and \ref{th2} are proved in Section 3. Finally, we
show in Section 4 that the situation is drastically different if we change the scalar field: there are real Banach spaces with an unconditional basis which support
no chaotic operators. The corresponding question for frequent \hy ity is still open.
\par\smallskip
Let us finish this introduction by mentioning that it is possible to extend Theorem \ref{th1} to the case of Fr\'{e}chet spaces \cite{DFGP_Frechet}: 
every complex separable Fr\'{e}chet space with a continuous norm and an unconditional Schauder decomposition admits a frequently hypercyclic and
chaotic operator. The same result holds true for complex Fr\'{e}chet spaces with an unconditional basis. 

\section{Some frequently hypercyclic and chaotic operators on Hilbert spaces}
So our aim is to show that if $X$ is a separable complex infinite-dimensional Banach space with an unconditional Schauder decomposition, then $X$
supports an \op\ which is both chaotic and frequently hypercyclic. We begin by constructing a particular class of nuclear perturbations of diagonal
\ops\ on a Hilbert space, which consists of \fhy\ and chaotic \ops.
\par\smallskip
For $n\geq 0$, let $H_{n}$ be the space $\ell^{2}$ endowed with the canonical basis $(e_{i,n})_{i\geq 0}$, and let $H=\oplus_{\ell^{2}}H_{n}$ be the
orthogonal sum of all the spaces $H_{n}$. If $(\mu_{n})_{n\geq 0}$ is any bounded sequence of complex numbers, the diagonal \op\ $\bar{D}_{\mu}$ is
defined by $\bar{D}_{\mu}(\oplus x_{n})=\oplus \mu_{n}x_{n}$ for any element $x=\oplus x_{n}$ in $H$. Now let $((w _{i,n})_{i\geq 0})_{n\geq 0}$ be a
sequence of positive weights such that
$$
\sup_{n\geq 0}\;\sup_{i\geq 0}w _{i,n}<+\infty .
$$
We define the \op\ $\bar{B}_{w }$ on $H$ by setting
$$
\bar{B}_{w }x=\sum_{n\geq 1}\sum_{i\geq 0}\pss{x}{e_{i,n}} \,w _{i,n-1} e_{i,n-1}.
$$
This is clearly a bounded \op\ on $H$, which is nothing but a backward weighted shift:
 $\bar{B}_{w }e_{i,0}=0$ for any $i\geq 0$, and
 $\bar{B}_{w }e_{i,n}=w _{i,n-1}e_{i,n-1}$ for $n\geq 1$ and $i\geq 0$.

\begin{theorem}\label{th3}
 Let $w =((w _{i,n})_{i\geq 0})_{n\geq 0}$ be a bounded sequence of positive weights.
 There exists a sequence $(\mu_{n})_{n\geq 0}$ of unimodular numbers such that the \op\
 $\bar{T}=\bar{D}_{\mu }+\bar{B}_{w }$ is \fhy\ and chaotic on $H$.
\end{theorem}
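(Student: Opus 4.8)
The plan is to produce a rich supply of unimodular eigenvectors for $\bar{T}$ and then invoke two standard criteria: that a \ps\ forces ergodicity with respect to a \nd\ \gam\ (hence frequent hypercyclicity), and that a dense span of eigenvectors attached to roots of unity forces a dense set of periodic points, so that chaos follows once hypercyclicity is in hand.

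First I would exploit the triangular structure. Each ``chain'' $\overline{\spa}\{e_{i,n}:n\geq 0\}$ is invariant under both $\bar{D}_{\mu }$ and $\bar{B}_{w }$, so solving $\bar{T}u=\lambda u$ coefficientwise gives the recursion $w _{i,n}\,a_{i,n+1}=(\lambda -\mu_n)\,a_{i,n}$. Hence, up to the free scalar $a_{i,0}$, the only candidate eigenvector supported on the $i$-th chain is
$$E_i(\lambda )=\sum_{n\geq 0}\Big(\prod_{k=0}^{n-1}\frac{\lambda -\mu_k}{w _{i,k}}\Big)e_{i,n},$$
and $\sum_i c_i E_i(\lambda )$ is an eigenvector of $\bar{T}$ in $H$ as soon as each $E_i(\lambda )$ lies in $\ell^2$ and $\sum_i |c_i|^2\norm{E_i(\lambda )}^2<+\infty$. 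Everything thus reduces to controlling the products $\prod_{k<n}(\lambda -\mu_k)/w _{i,k}$ through the choice of the unimodular sequence $(\mu_n)$.

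The construction of $(\mu_n)$ is the heart of the matter. I would fix a sequence $(\lambda_j)_j$ dense in $\T$ and containing a dense set of roots of unity, partition $\N$ into successive blocks $I_j$, and on each $I_j$ take $\mu_n$ very close to $\lambda_j$. Since the array $(w _{i,k})$ is fixed beforehand, I can choose, block by block, both the length $\card(I_j)$ and the distances $\abs{\mu_n-\lambda_j}$ small enough to guarantee, for all chains $i$ below the current level at once, that the product past $I_j$ is geometrically small at $\lambda =\lambda_j$ and, more importantly, that it stays summable for every $\lambda$ in a small arc $A_j$ around $\lambda_j$. Letting the arcs shrink slowly enough that $\bigcup_j A_j$ covers $\T$ up to a set negligible for a fixed continuous measure $\sigma $ on $\T$, one obtains a measurable field $\lambda \mapsto E_i(\lambda )$ defined $\sigma $-almost everywhere with $\int_\T\norm{E_i(\lambda )}^2\,d\sigma (\lambda )<+\infty$. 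To see that these eigenvectors are \emph{perfectly spanning}, I would argue by duality: if $x^*$ annihilates all of them, then for each $i$ the function $\lambda \mapsto \pss{E_i(\lambda )}{x^*}$ is a series of polynomials in $\lambda$ converging locally uniformly where the field is defined, hence analytic there, and it vanishes on a set of full $\sigma $-measure, so on a whole arc; this forces every $\pss{e_{i,n}}{x^*}$ to vanish, and running over $i$ gives $x^*=0$.

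The \ps\ then yields, by the \gam\ ergodicity theorem, that $\bar{T}$ is ergodic with respect to a \nd\ \inv\ \gam, so in particular $\bar{T}$ is \fhy. Finally, since the $\lambda_j$ include a dense set of roots of unity, the corresponding $E_i(\lambda_j)$ are periodic points whose span is dense by the same annihilation argument; hence $\bar{T}$ has a dense set of periodic points and is chaotic. The main obstacle I anticipate is exactly the simultaneous control of the infinite products for arbitrarily small weights: a single unimodular $\mu_n$ can be close to only one target $\lambda_j$, so on blocks not aimed at $\lambda$ the factors $(\lambda -\mu_k)/w _{i,k}$ may exceed $1$ when $w _{i,k}$ is tiny, threatening convergence. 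Calibrating the block lengths, the distances $\abs{\mu_n-\lambda_j}$, and the arcs $A_j$ against the given weights $(w _{i,k})$ — so that the gains on the on-target blocks dominate the possible losses elsewhere for $\sigma $-almost every $\lambda$ and for every chain at once — is where the real work lies.
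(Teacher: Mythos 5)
Your overall strategy is the paper's: compute the unimodular eigenvector fields $E_i(\lambda)$, arrange a \ps, invoke the Bayart--Grivaux theorem for frequent hypercyclicity, and take the $\mu_n$ among roots of unity for chaos. But the one step you defer --- the simultaneous control of the products $\prod_{p<n}(\lambda-\mu_p)/w_{i,p}$ --- is the entire content of the proof, and the block scheme you sketch cannot deliver it. If each block $I_j$ is aimed at a single target $\lambda_j$ from a fixed dense sequence, then for a given $\lambda$ the ``on-target'' gain occurs only on those blocks $I_j$ with $\lambda\in A_j$, while on every other block the factors $|\lambda-\mu_k|/w_{i,k}$ can be as large as a constant times $1/w_{i,k}$ with $w_{i,k}$ arbitrarily small; the accumulated loss between two consecutive good blocks is unbounded and is not known at the time $I_j$ is chosen, so no choice of $\card(I_j)$ and of the distances $|\mu_n-\lambda_j|$ made at stage $j$ can be guaranteed to compensate it. Keeping a \emph{fixed} continuous $\sigma$ on $\T$ and asking only that $\bigcup_j A_j$ be co-negligible makes this worse, since you would need the good arcs to recur at every $\lambda$ with a frequency calibrated to weights not yet seen. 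The paper resolves exactly this tension by a recursive dyadic construction: at generation $k$ the new points $\{\mu_q : q\in J_k\}$, $|J_k|=2^{k-1}$, form an $l_k$-net of the Cantor set $K=\bigcap_k\bigcup_{j\in J_k}\Gamma_j$ being built, so that \emph{every} $\lambda\in K$ acquires a factor $\leq l_k$ from some recent index $q<2^k$ while all other factors are $<1$; hence $\prod_{p=0}^{n-1}|\lambda-\mu_p|^2<l_k^2$ for all $n\geq 2^k$, and $l_k$ is chosen only after seeing the weights up to generation $k+1$. The measure $\sigma$ is then any continuous measure supported on $K$: it is the support of $\sigma$ that is adapted to the weights, not a prescribed measure on all of $\T$.

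A second, smaller, gap: your spanning argument via analyticity fails. The partial sums of $\pss{E_i(\lambda)}{x^*}$ are polynomials converging (at best) uniformly on an arc of $\T$, and by Lavrentiev's theorem a uniform limit of polynomials on an arc can be an arbitrary continuous function, so you cannot conclude analyticity and then the vanishing of all coefficients. The paper avoids this entirely: since the $\mu_N$ are distinct and all lie in $K$, the vector $E_i(\mu_N)$ is the \emph{finite} sum $e_{i,0}+\sum_{n=1}^{N}\bigl(\prod_{p=0}^{n-1}(\mu_N-\mu_p)/w_{i,p}\bigr)e_{i,n}$ with nonzero top coefficient, so the eigenvectors $E_i(\mu_N)$ already span all the $e_{i,n}$ by triangularity; continuity of the fields $E_i$ on $K$ then upgrades this to perfect spanning with respect to any continuous measure supported on $K$.
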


The proof of Theorem \ref{th3} relies on a criterion for frequent hypercyclicity which was proved in \cite{BG1} in the Hilbert space setting, and which
states that if $T\in \mathcal{B}(H)$ has ``sufficiently many'' \eve s associated to \eva s of modulus $1$, then $T$ is \fhy. Here is the precise
definition:

\begin{definition}\label{def1}
We say that a bounded operator $T$ on $X$ has a \ps\ if there exists a continuous probability measure $\sigma  $ on the unit circle $\T$ such that for
every $\sigma  $-measurable subset $A$ of $\T$ which is of $\sigma  $-measure $1$, $\spa \bigcup_{\lambda \in A} \ker (T-\lambda I) $ is dense in $X$.
\end{definition}
In other words if we take out from the unit circle a set of $\sigma  $-measure $0$ of \eva s, the \eve s associated to the remaining \eva s still span
$X$.
\par\smallskip
The following result is proved in \cite{BG2}:

\begin{theorem}\cite{BG2}\label{th0}
If $T$ is a bounded \op\ acting on a \sep\ infinite-dimensional complex Hilbert space $H$, and if $T$ has a \ps, then $T$ is \fhy.
\end{theorem}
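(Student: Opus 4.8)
The plan is to deduce frequent hypercyclicity from ergodic theory: I would construct a $T$-\inv\ \gam\ $m$ on $H$ with full support, show that $T$ is \erg\ (in fact \wmx) with respect to $m$, and then read off frequent hypercyclicity from Birkhoff's pointwise ergodic theorem. The role of the \ps\ is precisely to manufacture such a measure, while the continuity (non-atomicity) of $\sigma$ is what forces ergodicity. The first step is to upgrade the spanning hypothesis into a measurable form: by a measurable selection argument I would extract from the \ps\ a sequence of bounded measurable eigenvector fields $E_j\colon\T\to H$, $j\geq 1$, with $TE_j(\lambda)=\lambda E_j(\lambda)$ for $\sigma$-almost every $\lambda$, such that for every Borel set $A\subseteq\T$ with $\sigma(A)=1$ one has $\overline{\spa}\,\set{E_j(\lambda):j\geq 1,\ \lambda\in A}=H$. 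This reformulation is what couples the algebraic spanning condition to the measure $\sigma$.

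Next I would build the measure. For each $j$ let $K_j\colon L^2(\T,\sigma)\to H$ be the bounded operator $K_jf=\int_\T f(\lambda)E_j(\lambda)\,d\sigma(\lambda)$, set $R=\sum_{j\geq1}4^{-j}K_jK_j^{*}$ (a positive nuclear operator on $H$), and let $m$ be the centered \gam\ on $H$ with covariance $R$, realized as the law of a convergent series $\sum_j 2^{-j}\int_\T E_j\,d\xi_j$ of independent $H$-valued \ga\ integrals. The eigenvector relation gives $TK_jf=K_j(M_\lambda f)$, where $M_\lambda$ is multiplication by $\lambda$ on $L^2(\T,\sigma)$; since $\abs{\lambda}=1$, $M_\lambda$ is unitary and a direct covariance computation yields $TRT^{*}=R$, so $m$ is $T$-\inv. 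Moreover, the range of $R$ densely captures every direction $E_j(\lambda)$, so by the spanning property of the previous step $R$ has dense range and $m$ has full support; in particular every non-empty open subset of $H$ has positive $m$-measure.

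Then I would prove ergodicity. Through the \ga/Wiener-chaos formalism, the spectral type of the unitary Koopman operator induced by $T$ on $L^2(H,m)$ is governed, on the first chaos, by the maximal spectral measure of $M_\lambda$ on $L^2(\T,\sigma)$, which is absolutely continuous with respect to $\sigma$. Because $\sigma$ is continuous, $M_\lambda$ has no eigenvalues, hence a continuous spectral measure; the classical criterion for \ga\ dynamical systems (a \ga\ \mpt\ is \erg, equivalently \wmx, \ifff\ its spectral measure is continuous) then shows that $T$ is \erg\ with respect to $m$. Finally I would fix a countable base $(U_k)_{k\geq1}$ of the topology of $H$ and apply Birkhoff's theorem to each indicator $\mathbf{1}_{U_k}$: for $m$-almost every $x$ and every $k$,
$$\frac1N\,\abs{\set{n\leq N:T^{n}x\in U_k}}\To m(U_k)>0,$$
the limit being positive by full support. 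Intersecting these countably many full-measure sets yields a set of full measure, hence non-empty, of vectors whose visits to every $U_k$ — and therefore to every non-empty open set — have positive lower density; any such vector is \fhy.

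The main obstacle is the construction step fused with ergodicity: extracting genuinely measurable, $H$-valued eigenvector fields from the abstract \ps\ (a selection problem that must respect $\sigma$-null sets) and then checking that the resulting \gam\ is simultaneously $T$-\inv, of full support, and of continuous spectral type. Once the measure is in hand, the passage from ergodicity to frequent hypercyclicity via Birkhoff is routine.
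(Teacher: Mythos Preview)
The paper does not give its own proof of this statement: it simply cites \cite{BG2} and notes that an alternative argument appears in \cite{Grivaux10}. Your sketch is correct and is precisely the strategy of \cite{BG2}: one upgrades the perfectly spanning hypothesis to measurable eigenvector fields, builds from them a $T$-invariant non-degenerate Gaussian measure on $H$ (this is exactly the measure alluded to in Section~3.3 of the paper), deduces ergodicity from the continuity of $\sigma$ via the spectral criterion for Gaussian systems, and concludes frequent hypercyclicity by Birkhoff's theorem applied to indicators of a countable base.
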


See \cite{Grivaux10} for a different proof which extends to the Banach space case. We are now ready for the proof of Theorem \ref{th3}.

\begin{proof}[Proof of Theorem \ref{th3}]
 Let $\mu =(\mu _{n})_{n\geq 0}$ be for the moment an arbitrary sequence of
 unimodular numbers. Let $\lambda \in\T$, and $x=\oplus x_{n}\in H$.
 Then $\bar{T}x=\lambda x$ \ifff\ for any $n\geq 0$ and any $i\geq 0$,
$$
\mu _{n}x_{n}+\sum_{i\geq 0}\pss{x_{n+1}}{e_{i,n+1}}w _{i,n}e_{i,n}=\lambda x_{n},
$$
that is
$$
\pss{x_{n}}{e_{i,n}}=\frac{\lambda -\mu_{n-1}}{w _{i,n-1}}\pss{x_{n-1}}{e_{i,n-1}} \quad \textrm{so that}\quad
\pss{x_{n}}{e_{i,n}}=\prod_{p=0}^{n-1}\left( \frac{\lambda -\mu_{p}}{w _{i,p}}\right).
$$
Hence provided the series involved are convergent, the vectors
$$
E_{i}(\lambda )=e_{i,0}+\sum_{n\geq 1} \left(\prod_{p=0}^{n-1} \frac{\lambda -\mu_{p}}{w _{i,p}}\right) e_{i,n}, \quad i\geq 0
$$
are \eve s of $\bar{T}$ which span the eigenspace $\ker(\bar{T}-\lambda )$. Now our aim is to construct the diagonal coefficients $\mu _{p}$, 
 and a Cantor set $K\subset \T$ containing the $\mu_p$'s, in such a way
that the eigenvector fields $E_i$, $i\geq 0$, are well-defined and continuous on $K$, and the 
unimodular \eve s of $\bar{T}$ are perfectly spanning.
\par\smallskip
Write the set $\N_0$ of nonnegative integers as the disjoint union of successive intervals $J_{k}$, $k\geq 0$, where $|J_{k}|=2^{k-1}$ for $k\geq 1$:
$J_{0}=\{0\}$, $J_{1}=\{1\}$, $J_{2}=\{2,3\}$, $J_{3}=\{4,5,6,7\}$, $J_{4}=\{8,9,10,11,12,13,14,15\}$, etc... and more generally
$J_{k}=\{2^{k-1},\ldots, 2^{k}-1\}$.
\par\smallskip
$\bullet$ \textbf{Step 0:} to begin with, we take $\mu_{0}=1$.
\par\smallskip
$\bullet$ \textbf{Step 1:} let $\mu _{1}$ be an element of $\T$ distinct from $\mu _{0}$ with $|\mu _{0}-\mu _{1}|<1$ and such that the length
$l_{1}=|\mu _{0}-\mu _{1}|$ of the closed arc $\Gamma_1 $ joining in $\T$ $\mu _{0}$ and $\mu _{1}$ is so small that
$$l_{1}^{2}\sum_{n=2}^{3}\left(\prod_{p=0}^{n-1}\frac{1}{w_{i,p}^{2}}\right)<2^{-3}\quad \textrm{ for } i=0,1.$$
\par\smallskip
$\bullet$ \textbf{Step k:}  at step $k$, we construct the unimodular numbers $\mu _{n}$ for $n\in J_{k}$.
We choose  them in the set $  \bigcup_{p\in J_{k-1}} \Gamma_p$, where, for $p\in J_{k-1}$, $\Gamma  _{p}$ denotes the closed arc in $\T$  joining $\mu _{p}$ and $ \mu _{p-2^{k-2}}$. We also require that
$\mu _{0},\ldots,\mu _{2^{k}-1}$ be all distinct, and that for each $j=0,\ldots,2^{k-1}-1$, $\mu _{2^{k-1}+j}$ be very
close to $\mu _{j}$. More precisely, we pick these numbers $\mu _{2^{k-1}},\ldots, \mu _{2^{k}-1}$ in such a way that $l_{k}=\max\{|\mu _{2^{k-1}+j}-\mu _{j}| \textrm{ ; }
j=0,\ldots,2^{k-1}-1\}$ is so small that
$$
l_{k}^{2}\sum_{n\in J_{k+1}}\left(\prod_{p=0}^{n-1}\frac{1}{w_{i,p}^{2}}\right)<2^{-(k+2)} 
\quad \textrm{ for } i=0,1, \ldots, k.
$$
We then define $\Gamma_{2^{k-1}+j}$  for $j=0,\dots , 2^{k-1}-1$ as the closed arc in $\T$ joining $\mu_j$ and $\mu_{2^{k-1}+j}$.
\par\smallskip
Let $K$ be the compact subset of $\T$ defined by
$$
K:=\bigcap_{k\in\N} \left( \bigcup_{j\in J_k} \Gamma_j \right) .
$$
By construction, $\mu_p$ is in $ K$ for every $p\geq 0$. Fix $\lambda \in K$. Since 
 $\lambda $ belongs to $\bigcup_{j\in J_{k}} \Gamma_j$ for any $k\geq 1$,  there exists for any $k\geq 1$ an integer $q\in J_k$ with $|\lambda -\mu_q|\leq l_k$. Moreover the diameter of $K$ is smaller than $1$, and thus for every $k\geq 1$ and every $n\geq 2^k$, we have
$$
\prod_{p=0}^{n-1} |\lambda-\mu_p|^2 < l_k^2.
$$
Therefore we have for  each $i\geq 0$ and each $k'\geq i$ 
$$
\sum_{k\geq k'} \sum_{n\in J_{k+1}}\left(\prod_{p=0}^{n-1}\frac{|\lambda-\mu_p|^2}{w_{i,p}^{2}}\right) <
\sum_{k\geq k'} \sum_{n\in J_{k+1}}\left( l_{k}^{2}\prod_{p=0}^{n-1}\frac{1}{w_{i,p}^{2}}\right)<\sum_{k\geq k'} 
{2^{-(k+2)}} ={2^{-(k'+1)}}\cdot
$$
This implies that the series defining $E_i(\lambda )$ converges for every $\lambda \in K$ and every $i\geq 0$. 
\par\smallskip
Let us now prove that the \eve\ field $E_{i}$ is continuous on $K$ for each $i\geq 0$.
For any $\varepsilon>0$, let $k_0\geq i$ be such that $2^{-k_0}<\varepsilon /2$, and let
$$P(\lambda )=e_{i,0}+\sum_{n= 1}^{2^{k_{0}}} \left(\prod_{p=0}^{n-1} \frac{\lambda -\mu_{p}}{w _{i,p}}\right) e_{i,n}$$ be the polynomial in $\lambda $ corresponding to the $2^{k_0}$-th partial sum in the 
expression of $E_i$.
Then for all $\lambda, \lambda'\in K$ we have
$$
\norm{E_i(\lambda )-E_i(\lambda ')}^2<\norm{P(\lambda )-P(\lambda ')}^2+2.{2^{-{(k_0+1)}}}<\norm{P(\lambda )-P(\lambda ')}^2+\frac{\varepsilon }{2}\cdot
$$  Let now $\delta>0$ be such that $\norm{P(\lambda )-P(\lambda ')}^2<\varepsilon/2$ whenever 
$|\lambda-\lambda'|<\delta$. Then 
$$
\norm{E_i(\lambda )-E_i(\lambda ')}^2<\varepsilon , 
$$
 for any $\lambda,\lambda'\in K$ with $|\lambda-\lambda'|<\delta$, and this proves that the eigenvector fields 
 $E_i$, $i\geq 0$, are continuous on $K$. 
\par\smallskip
Observe now that the construction is done in such a way that all the coefficients $\mu_{p}$ are distinct. This implies that the \eve s $E_{i}(\lambda
)$, $i\geq 0$, $\lambda \in K$, span a dense subspace of $H$. Indeed for any $N\geq 0$, $\mu _{N}$ belongs to $K$ by construction, and
$$
E_{i}(\mu _{N})=e_{i,0}+\sum_{n= 1}^{N}\left(  \prod_{p=0}^{n-1}\frac{\mu_{N} -\mu_{p}}{w _{i,p}}\right) e_{i,n}.
$$
Hence $\textrm{sp} [E_{i}(\mu _{N}) \textrm{ ; } {i, N\geq 0}]$ contains all the vectors $e_{i,n}$, $i\geq 0 ,n\geq 0$. By continuity of the
eigenvector fields $E_i$, if $\sigma  $ is any continuous measure whose support is $K$, the \eve s $E_{i}(\lambda )$, $i\geq 0$, $\lambda \in K$, are
perfectly spanning \wrt\ $\sigma  $. Such a measure $\sigma$ exists because $K$ is a perfect compact subset of $\T$. Since our \op\ $\bar{T}$ is living
on a Hilbert space, Theorem \ref{th0} can be applied and $\bar{T}$ is \fhy.
\par\smallskip
If we additionally require that $\bar{T}$ be chaotic, it suffices to choose all the coefficients $\mu_{p}$ to be $n^{th}$ roots of $1$. Then the \eve
s of $\bar{T}$ associated to \eva s which are $n^{th}$ roots of $1$ span a dense subspace of $H$, and $\bar{T}$ is chaotic.
\end{proof}

\section{Proofs of Theorems \ref{th1} and \ref{th2}}

\subsection{Unconditional Schauder decompositions}
Let $X$ be a separable infinite-dimensional Banach space which admits an unconditional Schauder decomposition. This means that there exists a sequence
$(X_{n})_{n\geq 0}$ of closed subspaces of $X$ such that any $x\in X$ can be written in a unique way as an unconditionally convergent series
$x=\sum_{n\geq 0}x_{n}$, where $x_{n}$ belongs to $X_{n}$ for any $n\geq 0$. We denote in this case by $P_{n}$ the canonical projection $x\mapsto
x_{n}$ of $X$ onto $X_{n}$. If $(X_{n})_{n\geq 0}$ is an unconditional Schauder decomposition of $X$, and $(I_{k})_{k\geq 0}$ is any partition of $\N$
into finite or infinite subsets, let $Y_{k}$ denote the closed linear span of the spaces $X_{n}$, $n\in I_{k}$. Then $(Y_{k})_{k\geq 0}$ is also an
unconditional Schauder decomposition of $X$. Hence we will always suppose in the sequel that $(X_{n})_{n\geq 0}$ is an unconditional Schauder
decomposition of $X$ with all the subspaces $X_{n}$ infinite-dimensional. If $(\mu_{n})_{n\geq 0}$ is any bounded sequence of complex numbers, then the
multiplication operator $D_{\mu}:X\rightarrow X$ defined by
$$D_{\mu}(\sum_{n\geq 0}x_{n})=\sum_{n\geq 0}\mu_{n}x_{n}$$ is a
bounded operator on $X$ since the decomposition $X=\oplus_{n\geq 0}X_{n}$ is unconditional. Now since all $X_{n}$'s are infinite-dimensional, each of
them admits a biorthogonal system $(x_{i,n},x^{*}_{i,n})_{i\geq 0}$, where $x_{i,n}\in X_{n}$, $x_{i,n}^{*}\in X_{n}^{*}$ and $\pss {x_{i,n}^{*}}
{x_{j,n}} =\delta _{ij}$ for $i,j\geq 0$. Since $X=X_{n}\oplus \overline{\spa}\bigcup_{p\neq n}X_{p} $, we can extend $x_{i,n}^{*}$ to $X$ by setting
$x^{*}_{i,n}=0$ on $\overline{\spa}\bigcup_{p\neq n}X_{p} $. For $n\geq 1$, let $(w _{i,n})_{i\geq 0}$ be a sequence of positive weights going to zero
very fast when $i$ goes to infinity. Denote by $w $ the collection of sequences $((w _{i,n})_{i\geq 0})_{n\geq 1}$, and define the \op\ $B_{w }$ on $X$
by setting

$$
B_{w }x=\sum_{n\geq 1}\sum_{i\geq 0}\pss{x_{i,n}^{*}}{x} \,w _{i,n} x_{i,n-1} =\sum_{n\geq 1}\sum_{i\geq 0}\pss{x_{i,n}^{*}}{P_{n}x} \,w _{i,n}
x_{i,n-1}.
$$
Let $\norm{\cdot}^*$ be the dual norm of $\norm{\cdot}$. If the series

$$
\sum_{n\geq 1}\sum_{i\geq 0}w _{i,n} \,||x_{i,n-1}||\,.\, ||x_{i,n}^{*}||^*
$$
is convergent (which is the case if the quantities $w _{i,n}$ are suitably small), $B_{w }$ is a nuclear operator, hence a bounded \op.

\par\smallskip

\subsection{Proof of Theorem \ref{th1}}
The proof of Theorem \ref{th1} is now a straightforward application of the transference principle, as applied for instance in \cite{A}. Without loss of
generality we can suppose that

$$
\sum_{n\geq 0}\left(\sum_{i\geq 0}||x_{i,n}||^{2}\right)<+\infty .
$$
Then the \op\ $J:H\longrightarrow X$ defined by $Je_{i,n}=x_{i,n}$ for $i,n\geq 0$ is bounded, injective, and has dense range. Let $((w _{i,n})_{i\geq
0})_{n\geq 0}$ be a bounded sequence of positive weights such that the \op\ $B_{w }$ defined above is bounded on $X$. If $\bar{B}_{w }$ denotes the
backward weighted shift on $H$ associated to $w $, then $J\bar{B}_{w }=B_{w }J$. Now by Theorem \ref{th3} there exists a sequence of unimodular numbers
such that  $\bar{T}=\bar{D}_{\mu }+\bar{B}_{w }$ is \fhy\ and chaotic on $H$. Then $J(\bar{D}_{\mu }+\bar{B}_{w })=(D_{\mu }+B_{w })J$, and since $J$
is injective and has dense range, $T$ is \fhy\ and chaotic on $X$.

\subsection{Ergodicity with respect to an invariant gaussian measure: proof of Theorem \ref{th2}}
The proof of Theorem \ref{th2} is an immediate consequence of the intertwining equation $J(\bar{D}_{\mu }+\bar{B}_{w })=(D_{\mu }+B_{w })J$ above.
Since $\bar{T}$ acts on a Hilbert space, $\bar{T}$ admits an ergodic \nd\ \inv\ \ga\ \mea\ $\bar{m}$ by \cite{BG2}. Let $m$ be the \ga\ \mea\ on $X$
defined by $m(A)=\bar{m}(J^{-1}(A))$ for any Borel subset $A$ of $X$. This \mea\ is \nd\ and invariant by $T$. Lastly it is not difficult to check that
$T$ is \erg\ \wrt\ $m$: if $m(A)>0$ and $m(B)>0$, there exists an integer $N$ such that $\bar{m}(\bar{T}^{-N}(J^{-1}(A))\cap J^{-1}(B))>0$. Since
$J^{-1}(T^{-N}(A)\cap B)=\bar{T}^{-N}(J^{-1}(A))\cap J^{-1}(B)$ by the intertwining equation $TJ=J\bar{T}$, we have $m(T^{-N}(A)\cap B)>0$. This shows
that $T$ is \erg\ \wrt\ $m$.

\section{Chaotic operators and unconditional basis: the real
case}\label{real}

In this section, we observe that the situation changes completely if instead of considering complex Banach spaces as in Section 3, we consider real
Banach spaces.
\par\smallskip
The counterexample of Theorem \ref{real_gowers} below is built on a real Banach space $X_G$ constructed by Gowers in \cite{gowers1994a}: $X_{G}$ has an
unconditional basis $(e_{n})_n$, and  every  bounded operator $T\in L(X_G)$ is of the form $T=D_a+S$, where $D_a$ is a diagonal operator \wrt\ the
basis $(e_{n})_n$ associated to a bounded weight $a=(a_n)_n \in \R^\N$, and $S$ is a strictly singular operator (see \cite{gowers_maurey1997banach}).

\begin{theorem}\label{real_gowers}
The real separable infinite-dimensional Banach space $X_G$ has an unconditional basis, but admits no chaotic operator.
\end{theorem}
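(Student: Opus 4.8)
The plan is to combine two ingredients: the structural description of the \ops\ on $X_G$, and the spectral rigidity of chaotic \ops\ recalled in the introduction, namely that the spectrum of a chaotic \op\ has no isolated point (proved in \cite{S}). I argue by contradiction: assume $T=D_a+S$ is chaotic, where $D_a$ is diagonal along $(e_n)_n$ with a bounded real weight $a=(a_n)_n$ and $S$ is strictly singular. All spectral statements are read off the complexification $T_\C=(D_a)_\C+S_\C$, and I note at the outset that $(D_a)_\C$ is again diagonal with the \emph{same real} weights, while $S_\C$ is again strictly singular.

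First I would locate the essential spectrum. Since $(D_a)_\C$ is a bounded diagonal \op\ with real entries, its spectrum, and a fortiori its Fredholm essential spectrum $\sigma_{\mathrm{ess}}((D_a)_\C)$, is contained in $\R$. Because strictly singular \ops\ are inessential, they leave the Fredholm essential spectrum and the Fredholm index unchanged, so $\sigma_{\mathrm{ess}}(T_\C)=\sigma_{\mathrm{ess}}((D_a)_\C)\subseteq\R$. The set $\Omega=\C\setminus\sigma_{\mathrm{ess}}(T_\C)$ is then the complement in $\C$ of a compact subset of the real line, hence connected, and it contains the resolvent points of large modulus. On $\Omega$ the \op\ $T_\C-\lambda I$ is Fredholm of index $0$ (the index is locally constant, $\Omega$ is connected, and it is $0$ far away), so the analytic Fredholm theorem shows that $\sigma(T_\C)\cap\Omega$ is a discrete set of eigenvalues of finite multiplicity, each of which is an isolated point of $\sigma(T_\C)$.

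Next I would exploit the absence of isolated points. As $T$ is chaotic, $\sigma(T_\C)$ has no isolated point by \cite{S}; together with the previous step this forces $\sigma(T_\C)\cap\Omega=\varnothing$, i.e. $\sigma(T_\C)=\sigma_{\mathrm{ess}}(T_\C)\subseteq\R$, so $T_\C$ has no non-real eigenvalue. Now take any periodic point $x$, say $T^N x=x$. Passing to the complexification, $x\in\ker(T_\C^N-I)=\bigoplus_{\zeta^N=1}\ker(T_\C-\zeta I)$; every summand attached to a non-real root of unity $\zeta$ is $\{0\}$ because $\sigma(T_\C)\subseteq\R$, so only $\zeta=\pm1$ can contribute. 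Since $\zeta^2=1$ for both, this gives $T_\C^2 x=x$, that is $T^2 x=x$.

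Finally I would close the argument by density. The periodic points of $T$ are dense (chaos) and each satisfies $T^2 x=x$, so by continuity $T^2=I$ on all of $X_G$. But then the orbit of any vector $x$ is contained in the two-point set $\{x,Tx\}$, which cannot be dense in the infinite-dimensional space $X_G$; hence $T$ is not even \hy, a contradiction. The main obstacle is the spectral bookkeeping of the middle two paragraphs: one must set up the complexification carefully, invoke the invariance of the Fredholm essential spectrum under strictly singular perturbations, and apply the analytic Fredholm theorem on the connected set $\Omega$ so as to conclude that any spectrum off the real axis would be isolated. It is exactly here that the hypothesis on $X_G$ — every \op\ being diagonal (with \emph{real} diagonal) plus strictly singular — is decisive, and this is what fails over $\C$.
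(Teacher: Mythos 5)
Your proof is correct and follows the same overall strategy as the paper's: use the representation $T=D_a+S$ with real diagonal part and strictly singular part to show that any non-real spectrum of the complexification would consist of isolated points, and then contradict the spectral behaviour of a chaotic operator. Two execution differences are worth noting. First, where you invoke the invariance of the Fredholm essential spectrum under strictly singular perturbations together with the analytic Fredholm theorem on the connected set $\Omega$, the paper reaches the same conclusion more directly via Putnam's theorem (Proposition 3.7.8 in \cite{LN}): a non-isolated boundary point of $\sigma(\wt{T})$ must lie in the essential spectrum, which is impossible off $\R$ because $\wt{D}_a-\lambda I$ is then invertible and $\wt{S}$ is strictly singular, so $\wt{T}-\lambda I$ is Fredholm of index $0$. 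Second, the paper simply asserts that the spectrum of a chaotic operator meets $\T\setminus\R$ and derives the contradiction at such a boundary point, whereas you prove the needed non-reality from scratch: if $\sigma(\wt{T})\subseteq\R$, then every periodic point satisfies $T^2x=x$, so density of periodic points forces $T^2=I$, which is incompatible with hypercyclicity. Your version is more self-contained on this step, since the paper's assertion ultimately rests on the same kind of argument (a chaotic operator must have non-real roots of unity among its eigenvalues). Both routes rely on, and you correctly flag, the facts that the complexification of a chaotic operator is chaotic, that the complexified diagonal keeps the same real weights, and that the complexification of a strictly singular operator is strictly singular.
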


\begin{proof}
Suppose that  $T\in \mathcal{B}(X_G)$ is a chaotic operator on $X_{G}$, $T=D_a+S$, where $D_a$ is the diagonal operator \wrt\ the unconditional basis
$(e_{n})_n$ corresponding to $a=(a_n)_n \in \R^\N$ . Then its complexification $\wt{T}$ on the complexification $\wt{X}_{G}$ is also chaotic
\cite{bes_peris1999hereditarily}, and $\wt{T}$ can be written as $\wt{T}=\wt{D}_a+\wt{S}$. Since $\wt{T}$ is chaotic, its spectrum has no isolated
points and intersects $\T\setminus \R$. Therefore we can select a boundary point $\lambda$ of  $\sigma (\wt{T})\setminus\R$ and, by Putnam's theorem
(see, e.g., Proposition 3.7.8 in \cite{LN}) we obtain that $\lambda$ belongs to the essential spectrum of $\wt{T}$. On the other hand, since every
$a_n$ belongs to $\R$, the operator $\wt{D}_a-\lambda I$ is invertible on $\wt{X}_{G}$. Thus $\wt{T}-\lambda I$, being a perturbation of an invertible
operator by a strictly singular operator, is a Fredholm operator of index $0$ on $\wt{X}_{G}$, which contradicts the fact that $\lambda\in
\sigma_e(\wt{T})$. Hence $T$ is not chaotic on $X_{G}$.
\end{proof}

Theorem \ref{real_gowers} leads naturally to the following question:

\begin{question}
 Does the space $X_{G}$ support a \fhy\ \op? More generally, does there exist a real \sep\ Banach space with an unconditional basis which supports no
 \fhy\ \op?
\end{question}

\par\bigskip

\textit{Acknowledgement:} We would like to thank P. Tradacete for interesting discussions about the results in Section~\ref{real}.

\end{document}